\newcommand{\al}{\alpha}
\newcommand{\be}{\beta}
\newcommand{\de}{\delta}
\newcommand{\la}{\lambda}
\newcommand{\eps}{\varepsilon}
\newcommand{\vv}{\varphi}
\newcommand{\iy}{\infty}
\theoremstyle{plain}
\numberwithin{equation}{section}
\newtheorem{thm}{Theorem}[section]
\newtheorem{lem}[thm]{Lemma}
\theoremstyle{definition}
\newtheorem{alg}[thm]{Algorithm}
\newtheorem{ip}[thm]{Inverse Problem}
\theoremstyle{remark}
\newtheorem{remark}[thm]{Remark}
\begin{document}

\begin{center}
{\Large\bf A new approach to the inverse discrete transmission eigenvalue problem}
\\[0.2cm]
{\bf Natalia P. Bondarenko and Vjacheslav A. Yurko} \\[0.2cm]
\end{center}

\vspace{0.5cm}

{\bf Abstract.} A discrete analog is considered for the inverse transmission eigenvalue problem, having applications in acoustics.
We provide a well-posed inverse problem statement, develop a constructive procedure for solving this problem, prove uniqueness of solution, global solvability, local solvability, and stability. Our approach is based on the reduction of the discrete transmission eigenvalue problem to a linear system with polynomials of the spectral parameter in the boundary condition.

\medskip

{\bf Keywords:} inverse problems; discrete transmission eigenvalue problem; Weyl function; solvability.

\medskip

{\bf AMS Mathematics Subject Classification (2010):} 15A29 15A18 34A55 

\vspace{1cm}

\section{Introduction} \label{sec:intr}

The paper is concerned with the discrete transmission eigenvalue problem. The motivation of our study is related with the
acoustic inverse scattering problem in an inhomogeneous medium (see \cite{MP94}):
\begin{equation} \label{transPDE}
    \begin{cases}
        \Delta u + \la \rho(x) u = 0, \quad x \in B, \\
        \Delta v + \la v = 0, \quad x  \in B, \\
        u(x) = v(x), \quad \frac{\partial u(x)}{\partial n} = \frac{\partial v(x)}{\partial n}, \quad x \in \partial B,
    \end{cases}
\end{equation}
where $B$ is the ball of radius $b > 0$ in $\mathbb R^3$, $\partial B$ is its boundary, $\rho(x) > 0$ is the refraction index, $\frac{\partial}{\partial n}$ is the normal derivative. The inverse transmission eigenvalue problem consists in reconstruction of the function $\rho(x)$, which is related with the speed of sound, from the eigenvalues of the problem~\eqref{transPDE}. The majority of the studies of the inverse transmission eigenvalue problem (see \cite{MP94, MPS94, CCM07, AGP11, CL13, BYY15, BB17, BY17, GP17, XY20, BCK20}) deal with the radially symmetric case, when the problem~\eqref{transPDE} is reduced to the one-dimensional form
\begin{equation} \label{transODE}
\begin{cases}
    u'' + \la \rho(x) u = 0, \quad 0 < x < b, \\
    v'' + \la v = 0, \quad 0 < x < b, \\
    u(0) = v(0) = 0, \quad u(b) = v(b), \quad u'(b) = v'(b).
\end{cases}
\end{equation}

There were some attempts to study the inverse discrete transmission eigenvalue problems in \cite{PD11, Wei13, AP15, WW18}. In particular, the following discrete analogs of the continuous problem \eqref{transODE} have been considered:
\begin{equation} \label{dt1}
\begin{cases}
-\psi_{n + 1} + 2 \psi_n - \psi_{n-1} = \la \rho_n \psi_n, \quad n = \overline{1, l}, \\
-\psi_{n + 1}^{\bullet} + 2 \psi_n^{\bullet} - \psi_{n-1}^{\bullet} = \la \psi_n^{\bullet}, \quad n = \overline{1, l}, \\
\psi_0 = \psi_0^{\bullet} = 0, \quad \psi_l = \psi_l^{\bullet}, \quad \psi_{l + 1} = \psi_{l + 1}^{\bullet},
\end{cases}
\end{equation}
and
\begin{equation} \label{dt2}
\begin{cases}
-\psi_{n + 1} + V_n \psi_n - \psi_{n-1} = \la \psi_n, \quad n = \overline{1, l}, \\
-\psi_{n + 1}^{\bullet} + 2 \psi_n^{\bullet} - \psi_{n-1}^{\bullet} = \la \psi_n^{\bullet}, \quad n = \overline{1, l}, \\
\psi_0 = \psi_0^{\bullet} = 0, \quad \psi_l = \psi_l^{\bullet}, \quad \psi_{l + 1} = \psi_{l + 1}^{\bullet}.
\end{cases}
\end{equation}

The corresponding inverse transmission problems consist in recovering the coefficients $\{ \rho_n \}_{n = 1}^l$ or $\{ V_n \}_{n = 1}^l$ from the eigenvalues of \eqref{dt1} or \eqref{dt2}, respectively.
However, the important disadvantage of those problems is their ill-posedness, since they consist in determining $l$ unknown numbers by $(2l-1)$ or $(2l-2)$ known eigenvalues. Therefore, a small perturbation of the transmission eigenvalues influences the existence of inverse problem solution. Consequently, the known results in this direction are limited to uniqueness theorems and constructive methods of solving inverse problems, but the solvability issues remain open.

In this paper, we suggest a new approach to the inverse discrete transmission eigenvalue problem. We provide a well-posed inverse problem statement (without overdetermination in the input data), develop a constructive procedure for solving this problem, prove uniqueness of solution, global solvability, local solvability, and stability. 

Let us briefly describe our approach. Consider the generalized discrete transmission boundary value problem with respect to vectors $[\psi_n]_{n = 0}^{l + 1}$ and $[\psi_n^{\bullet}]_{n = 0}^{l + 1}$:
\begin{align} \label{eqpsi}
    & \al_n \psi_{n + 1} + \beta_n \psi_n + \al_n \psi_{n-1} = \la \psi_n, \quad n = \overline{1, l}, \\ \label{eqtpsi}
    & \al_n^{\bullet} \psi_{n + 1}^{\bullet} + \beta_n^{\bullet} \psi_n^{\bullet} + \al_n^{\bullet} \psi_{n-1}^{\bullet} = \la \psi_n^{\bullet}, \quad n = \overline{1, l}, \\ \label{bcpsi}
    & \psi_0 = \psi_0^{\bullet} = 0, \quad \psi_l = \psi_l^{\bullet}, \quad \psi_{l + 1} = \psi_{l + 1}^{\bullet},
\end{align}
where $\la$ is the spectral parameter, $\al_n, \be_n, \al_n^{\bullet}, \be_n^{\bullet} \in \mathbb C$, $n = \overline{1, l}$. 

Note that every linear system in the form
$$
a_n u_{n + 1} + b_n u_n + c_n u_{n-1} = \la \rho_n u_n, \quad n = \overline{1, l},
$$
where $a_n c_n \rho_n \ne 0$, $n = \overline{1, l}$, can be reduced to the form~\eqref{eqpsi} by the change of variables $u_n = d_n \psi_n$ with some coefficients $d_n \ne 0$, $n = \overline{1, l}$.

Assume that $\al_n \ne 0$, $\al_n^{\bullet} \ne 0$, $n = \overline{1, l}$, and $\al_l \ne \al_l^{\bullet}$. Then, the boundary value problem \eqref{eqpsi}-\eqref{bcpsi} has $(2l-1)$  eigenvalues $\{ \la_j \}_{j = 1}^{2l-1}$ (counting with multiplicities). Our inverse discrete transmission problem is stated as follows.

\begin{ip} \label{ip:psi}
Suppose that $\{ \al_n^{\bullet} \}_{n = 1}^l$, $\{ \be_n^{\bullet} \}_{n = 1}^l$, and $\al_l$ are known a priori. Given the eigenvalues $\{ \la_j \}_{j = 1}^{2l-1}$, find $\{ \al_n \}_{n = 1}^{l-1}$ and $\{ \be_n \}_{n = 1}^l$.
\end{ip}

Our method of solution is based on the reduction of the problem \eqref{eqpsi}-\eqref{bcpsi} to the form
\begin{align} \label{eqy}
    & a_n y_{n + 1} + b_n y_n + y_{n-1} = \la y_n, \quad n = \overline{1, l}, \\ \label{bcR}
    & R_0(\la) y_1 - R_1(\la) y_0 = 0, \quad y_{l + 1} = 0,
\end{align}
where $a_n, b_n \in \mathbb C$, $a_n \ne 0$, $n = \overline{1, l}$, and $R_0(\la)$, $R_1(\la)$ are relatively prime polynomials (i.e. not having common roots). The latter polynomials are constructed by the coefficients $\{ \al_n^{\bullet} \}_{n = 1}^l$, $\{ \be_n^{\bullet} \}_{n = 1}^l$, $\al_l$ so that the eigenvalues of the problem \eqref{eqy}-\eqref{bcR} coincide with the eigenvalues of \eqref{eqpsi}-\eqref{bcpsi}. Thus, Inverse Problem~\ref{ip:psi} is reduced to the following problem.

\begin{ip} \label{ip:R}
    Given eigenvalues $\{ \la_j \}_{j = 1}^{2l-1}$ of the problem \eqref{eqy}-\eqref{bcR}, find $\{ a_n \}_{n = 1}^{l-1}$ and $\{ b_n \}_{n = 1}^l$.
\end{ip}

Note that the coefficient $a_l$ is multiplied by $y_{l + 1} = 0$ in \eqref{eqy}, so this coefficient cannot be recovered. Although we are primarily interested in Inverse Problem~\ref{ip:R} in connection with Inverse Problem~\ref{ip:psi}, our method for solving Inverse Problem~\ref{ip:R} is developed for the case of arbitrary relatively prime polynomials $R_0(\la)$ and $R_1(\la)$ satisfying some additional conditions. We reduce Inverse Problem~\ref{ip:R} to the reconstruction of $\{ a_n \}_{n = 1}^{l-1}$ and $\{ b_n \}_{n = 1}^l$ from the Weyl coefficients defined in Section~\ref{sec:aux}. The latter problem is equivalent to the classical inverse problem that consists in the reconstruction of the coefficients from the two spectra $\{ \mu_j \}_{j = 1}^l$ and $\{ \nu_j \}_{j = 1}^{l-1}$ of the eigenvalue problems for equations~\eqref{eqy} with the boundary conditions $y_0 = y_{l + 1} = 0$ and $y_1 = y_{l + 1} = 0$, respectively. In order to deal with this classical inverse problem, we adapt the methods of Yurko \cite{Yur96, Yur00}. Note that in \cite{Yur96, Yur00} these methods are developed for arbitrary discrete systems of triangular structure corresponding to arbitrary-order differential operators. 

It is worth mentioning that \eqref{eqy}-\eqref{bcR} is the discrete analog of the Sturm-Liouville problem
\begin{equation} \label{entire}
    \begin{cases}
        -y'' + q(x) y = \la y, \quad x \in (0, \pi), \\
        y(0) = 0, \quad f_1(\la) y'(\pi) + f_2(\la) y(\pi) = 0,
    \end{cases}
\end{equation}
with entire analytic functions $f_1(\la)$ and $f_2(\la)$ in the boundary condition. Bondarenko \cite{Bond20-Open, Bond20-mmas, Bond21} has developed a unified approach, which is based on the reduction to the form~\eqref{entire}, to a wide class of inverse problems for differential operators, including the Hochstadt-Lieberman half-inverse problem~\cite{HL78}, the inverse transmission eigenvalue problem \cite{MP94, BB17}, partial inverse problems on metric graphs \cite{Bond18, Bond21}. The approach of \cite{Bond20-Open, Bond20-mmas, Bond21} allows not only to prove the uniqueness theorems, but also to develop constructive algorithms for solution and to obtain solvability conditions for various types of partial inverse problems. In this paper, the idea of such reduction is transferred to the discrete case.

The paper is organized as follows. In Section~\ref{sec:aux}, the solution of the auxiliary inverse problem for equations~\eqref{eqy} by the Weyl coefficients is described. In Section~\ref{sec:main}, we provide our main results concerning Inverse Problems~\ref{ip:psi}-\ref{ip:R}. In Section~\ref{sec:Hochstadt}, we discuss the connection of our results with the classical results of Hochstadt~\cite{Hoch74, Hoch79}. In particular, we show that the inverse problem for a Jacobi matrix by mixed data \cite{Hoch79} can be easily reduced to Inverse Problem~\ref{ip:R} and solved by our method.

\section{Auxiliary inverse problem} \label{sec:aux}

Denote by $[P_n]_{n = 0}^{l + 1}$, $[Q_n]_{n = 0}^{l + 1}$, and $[\Phi_n]_{n = 0}^{l + 1}$ the solutions of the system~\eqref{eqy} satisfying the initial conditions
$$
P_0(\la) = 0, \quad P_1(\la) = 1, \quad Q_0(\la) = 1, \quad Q_1(\la) = 0,
$$
and the boundary conditions 
\begin{equation} \label{bcPhi}
\Phi_0(\la) = 1, \quad \Phi_{l + 1}(\la) = 0,
\end{equation}
respectively. Obviously,
\begin{equation} \label{PhiPQ}
\Phi_n(\la) = Q_n(\la) + M(\la) P_n(\la), \quad n = \overline{1, l},
\end{equation}
where $M(\la)$ is called \textit{the Weyl function} of~\eqref{eqy}.
Using \eqref{bcPhi}, we obtain
\begin{equation} \label{fracM}
M(\la) = -\frac{Q_{l + 1}(\la)}{P_{l + 1}(\la)}.
\end{equation}

Below the notation $f(\la) \sim f_0 \la^k$ means that $f(\la)$ is a polynomial of degree $k$ with the leading coefficient $f_0$. By induction, we show that
\begin{equation} \label{asymptP}
P_{n + 1}(\la) \sim \left( \prod_{k = 1}^n a_k^{-1} \right) \la^n, \quad
Q_{n + 1}(\la) \sim -\left( \prod_{k = 1}^n a_k^{-1} \right) \la^{n-1}, \quad n = \overline{1, l}.
\end{equation}
Therefore, $M(\la)$ is a rational function, so it can be represented in the form
\begin{equation} \label{seriesM}
M(\la) = \sum_{k = 1}^{\iy} M_k \la^{-k}, \quad M_1 = 1,
\end{equation}
where the series uniformly converges for sufficiently large $r > 0$, $|\la| \ge r$. We call the numbers $\{ M_k \}_{k = 1}^{2l}$ \textit{the Weyl coefficients}.

\begin{lem} \label{lem:Weyl}
The Weyl coefficients $\{ M_k \}_{k = 1}^{2l}$ uniquely specify $M(\la)$.
\end{lem}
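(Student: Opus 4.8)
The plan is to compare two instances of the problem having matching Weyl coefficients and to exploit the fact, supplied by \eqref{fracM} and \eqref{asymptP}, that $M(\la)$ is a \emph{proper} rational function whose denominator has degree exactly $l$. Suppose two coefficient sets $\{ a_n, b_n \}_{n = 1}^l$ and $\{ \hat a_n, \hat b_n \}_{n = 1}^l$ give rise, respectively, to the solutions $[P_n], [Q_n]$ and $[\hat P_n], [\hat Q_n]$ of \eqref{eqy}, to the Weyl functions $M$ and $\hat M$, and to the Weyl coefficients $M_k$ and $\hat M_k$; assume $M_k = \hat M_k$ for $k = \overline{1, 2l}$. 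By \eqref{fracM} we have $M = -Q_{l + 1}/P_{l + 1}$ and $\hat M = -\hat Q_{l + 1}/\hat P_{l + 1}$, and by \eqref{asymptP} the polynomials $P_{l + 1}, \hat P_{l + 1}$ have degree exactly $l$ (with nonzero leading coefficients, since $a_k \ne 0$), while $Q_{l + 1}, \hat Q_{l + 1}$ have degree at most $l - 1$.

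Next I would introduce the polynomial
\[
\Delta(\la) := \hat Q_{l + 1}(\la) P_{l + 1}(\la) - Q_{l + 1}(\la) \hat P_{l + 1}(\la),
\]
which, by the degree bounds just recorded, satisfies $\deg \Delta \le 2l - 1$. Multiplying the identities $M P_{l + 1} = -Q_{l + 1}$ and $\hat M \hat P_{l + 1} = -\hat Q_{l + 1}$ by $\hat P_{l + 1}$ and $P_{l + 1}$, respectively, and subtracting, we obtain
\[
\Delta(\la) = \bigl( M(\la) - \hat M(\la) \bigr) P_{l + 1}(\la) \hat P_{l + 1}(\la).
\]
Since $M_k = \hat M_k$ for $k = \overline{1, 2l}$, the expansion \eqref{seriesM} gives $M(\la) - \hat M(\la) = \sum_{k = 2l + 1}^{\iy} (M_k - \hat M_k) \la^{-k} = O(|\la|^{-2l - 1})$ as $|\la| \to \iy$, whereas $P_{l + 1}(\la) \hat P_{l + 1}(\la) = O(|\la|^{2l})$. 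Hence $\Delta(\la) \to 0$ as $|\la| \to \iy$.

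Finally, a polynomial that tends to zero at infinity is identically zero, so $\Delta \equiv 0$, that is, $Q_{l + 1} \hat P_{l + 1} \equiv \hat Q_{l + 1} P_{l + 1}$. Dividing by the polynomials $P_{l + 1}$ and $\hat P_{l + 1}$, which are not identically zero, we conclude $M = -Q_{l + 1}/P_{l + 1} = -\hat Q_{l + 1}/\hat P_{l + 1} = \hat M$, which is the assertion of the lemma. The only point that needs care is the degree bookkeeping: one must use \eqref{asymptP} to guarantee that $\deg P_{l + 1} = l$ exactly and $\deg Q_{l + 1} \le l - 1$, so that the product $P_{l + 1} \hat P_{l + 1}$ has degree precisely $2l$ and the $2l$ prescribed Weyl coefficients are exactly enough to force $\Delta \equiv 0$; the rest of the argument is routine. (Equivalently, one could match the Laurent coefficients in $M(\la) P_{l + 1}(\la) = -Q_{l + 1}(\la)$ and show directly that $M_1, \dots, M_{2l}$ determine the coefficients of $P_{l + 1}$ and $Q_{l + 1}$ up to a common scalar, but the comparison argument above is cleaner.)
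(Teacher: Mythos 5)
Your proposal is correct and follows essentially the same route as the paper: compare two Weyl functions with matching first $2l$ coefficients, use \eqref{fracM} and the degree bounds from \eqref{asymptP} to see that the cross-difference polynomial $Q_{l+1}\hat P_{l+1} - \hat Q_{l+1} P_{l+1}$ has degree at most $2l-1$ while decaying at infinity, hence vanishes identically. Your write-up merely makes explicit the degree bookkeeping that the paper leaves implicit.
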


\begin{proof}
Suppose that the Weyl functions $M(\la)$ and $\tilde M(\la)$ have the Weyl coefficients $\{ M_k \}_{k = 1}^{2l}$ and $\{ \tilde M_k \}_{k = 1}^{2l}$, respectively. Using \eqref{fracM}, we obtain
$$
\frac{Q_{l + 1}(\la)}{P_{l + 1}(\la)} - \frac{\tilde Q_{l + 1}(\la)}{\tilde P_{l + 1}(\la)} = O\left(\la^{-(2l+1)}\right), \quad |\la| \to \iy.
$$
This implies
$$
Q_{l + 1}(\la) \tilde P_{l + 1}(\la) - P_{l + 1}(\la) \tilde Q_{l + 1}(\la) \equiv 0.
$$
Hence $M(\la) \equiv \tilde M(\la)$.
\end{proof}

\begin{ip} \label{ip:Weyl}
Given the Weyl coefficients $\{ M_k \}_{k = 2}^{2l}$, find $\{ a_n \}_{n = 1}^{l-1}$ and $\{ b_n \}_{n = 1}^l$.
\end{ip}

The coefficient $a_l$ cannot be recovered because of the condition $\Phi_{l + 1} = 0$. Without loss of generality assume that $a_l = 1$.

Denote by $\{ \mu_j \}_{j = 1}^l$ and $\{ \nu_j \}_{j = 1}^{l-1}$ the zeros of the polynomials $P_{l + 1}(\la)$ and $Q_{l + 1}(\la)$, respectively (counting with multiplicities). Clearly, $\{ \mu_j \}_{j = 1}^l$ and $\{ \nu_j \}_{j = 1}^{l-1}$ are the eigenvalues of the boundary value problems for the system~\eqref{eqy} with the boundary conditions $y_0 = y_{l + 1} = 0$ and $y_1 = y_{l + 1} = 0$, respectively. For $\{ \nu_j \}_{j = 1}^{l-1}$, it is supposed that $n = \overline{2, l}$ in \eqref{eqy}.
In view of \eqref{fracM}, \eqref{asymptP}, and Lemma~\ref{lem:Weyl}, the two spectra $\{ \mu_j \}_{j = 1}^l$ and $\{ \nu_j \}_{j = 1}^{l-1}$ uniquely specify $M(\la)$, and vice versa. Hence, Inverse Problem~\ref{ip:Weyl} is equivalent to the following classical inverse problem by two spectra.

\begin{ip} \label{ip:2sp}
Given $\{ \mu_j \}_{j = 1}^l$ and $\{ \nu_j \}_{j = 1}^{l-1}$, find $\{ a_n \}_{n = 1}^{l-1}$ and $\{ b_n \}_{n = 1}^l$.
\end{ip}

Proceed with constructive solution of Inverse Problem~\ref{ip:Weyl}.
In view of \eqref{asymptP}, we have
\begin{equation} \label{polyP}
P_{n + 1}(\la) = \sum_{i = 0}^n c_{in} \la^i, \quad n = \overline{0, l},
\end{equation}
where $c_{in} \in \mathbb C$, $c_{nn} \ne 0$. Substituting \eqref{polyP} into \eqref{eqy}, we obtain
$$
a_n c_{nn} = c_{n-1,n-1}, \quad a_n c_{n-1,n} + b_n c_{n-1,n-1} = c_{n-2, n-1}, \quad n = \overline{1, l},
$$
where $c_{-1,0} = 0$. Hence
\begin{equation} \label{calcab}
a_n = \frac{c_{n-1,n-1}}{c_{nn}}, \quad b_n = \frac{c_{n-2,n-1} - a_n c_{n-1,n}}{c_{n-1,n-1}}, \quad n = \overline{1, l}.
\end{equation}

Denote by $[v_n]_{n = 0}^{l+1}$ the solution of equation~\eqref{eqy} satisfying the initial conditions $v_{l + 1} = 0$, $v_l = 1$. One can easily show that
\begin{equation} \label{Phiv}
\Phi_n(\la) = \frac{v_n(\la)}{v_0(\la)}, \quad n = \overline{0, l + 1},
\end{equation}
and $v_n(\la) \sim \la^{l-n}$. Consequently,
\begin{equation} \label{asymptPhi}
\Phi_n(\la) = \la^{-n} + O\left( \la^{-(n + 1)}\right), \quad |\la| \to \iy,
\end{equation}
that is, there are the zero coefficients at $\la^{-k}$, $k = \overline{1, n-1}$. Using \eqref{PhiPQ}, \eqref{seriesM}, \eqref{polyP}, and \eqref{asymptPhi}, we obtain the relations
\begin{equation} \label{relcM}
\sum_{i = 0}^n c_{in} M_{i + k + 1} = \de_{nk}, \quad k = \overline{0, n}, \quad n = \overline{1, l},
\end{equation}
where $\de_{nk}$ is the Kronecker delta. 

\begin{thm} \label{thm:nscWeyl}
For complex numbers $\{ M_k \}_{k = 1}^{2l}$ $(M_1 = 1)$ to be the Weyl coefficients of \eqref{eqy} with some $\{ a_n \}_{n = 1}^l$ and $\{ b_n \}_{n = 1}^l$, $a_n \ne 0$, $n = \overline{1, l-1}$, $a_l = 1$, the following condition is necessary and sufficient: 
\begin{equation} \label{Dcond}
\Delta_n := \det([M_{i + j - 1}]_{i,j = 1}^{n + 1}) \ne 0, \quad n = \overline{1, l-1}.
\end{equation}
Under the condition~\eqref{Dcond}, the solution of Inverse Problem~\ref{ip:Weyl} is unique.
\end{thm}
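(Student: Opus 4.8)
The plan is to extract from the relations~\eqref{relcM} a Hankel-matrix structure and then reproduce, in the present finite setting, the classical link between Hankel determinants and three-term recurrences. The starting observation is that, for each fixed $n$, the relations~\eqref{relcM} form a linear system for the vector $(c_{0n},\dots,c_{nn})$ whose coefficient matrix is exactly $[M_{i+j-1}]_{i,j=1}^{n+1}$, of determinant $\Delta_n$. Everything below is organized around passing information between this system (for varying $n$) and the coefficients $\{a_n\}$, $\{b_n\}$ through~\eqref{calcab} and~\eqref{asymptP}.

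For necessity I would assume that $\{M_k\}_{k=1}^{2l}$ are the Weyl coefficients of a system~\eqref{eqy} with some $\{a_n\}$, $\{b_n\}$, $a_n\ne 0$ for $n=\overline{1,l-1}$, $a_l=1$. Then~\eqref{relcM} holds and, by~\eqref{asymptP}, the leading coefficients $c_{mm}=\prod_{k=1}^m a_k^{-1}$ are nonzero for $m=\overline{0,l-1}$. For $n=\overline{1,l-1}$ I would form the lower-triangular matrix $C_n=[c_{im}]_{i,m=0}^n$ (with $c_{im}=0$ for $i>m$), whose diagonal entries are nonzero, and the Hankel matrix $H_n=[M_{i+k+1}]_{i,k=0}^n$ (all its entries are among the given data, since the indices stay $\le 2l-1$), with $\det H_n=\Delta_n$. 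Reading~\eqref{relcM} column by column shows that the on-and-above-diagonal part of $H_nC_n$ coincides with that of the identity matrix, so $H_nC_n$ is lower-triangular with unit diagonal and $\det(H_nC_n)=1$. Hence $\Delta_n\det C_n=1$, which forces $\Delta_n\ne 0$; along the way Cramer's rule gives $c_{nn}=\Delta_{n-1}/\Delta_n$ (with $\Delta_0:=M_1=1$), a formula I will reuse.

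For sufficiency and uniqueness I would run the construction backwards. Assuming~\eqref{Dcond}, for each $n=\overline{1,l-1}$ the system~\eqref{relcM} has a unique solution $(c_{0n},\dots,c_{nn})$ with $c_{nn}=\Delta_{n-1}/\Delta_n\ne 0$; together with $c_{00}:=1$ and $P_0:=0$ this defines polynomials $P_{n+1}(\la):=\sum_{i=0}^n c_{in}\la^i$ of exact degree $n$ for $n=\overline{0,l-1}$. On the space of polynomials of degree $\le l-1$ I would introduce the symmetric bilinear form $\langle\la^i,\la^j\rangle:=M_{i+j+1}$ (well defined, since the required indices are $\le 2l-1$); under it,~\eqref{relcM} becomes the orthogonality $\langle P_m,P_{m'}\rangle=c_{m-1,m-1}\delta_{mm'}$, so $P_1,\dots,P_l$ is an orthogonal basis in the relevant degrees. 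Using the shift identity $\langle\la p,q\rangle=\langle p,\la q\rangle$ (valid where both sides are defined), I would expand $\la P_n$ ($n=\overline{1,l-1}$) in the basis $P_1,\dots,P_{n+1}$ and evaluate the coefficients via inner products and~\eqref{relcM}: the coefficients below index $n-1$ vanish, the coefficient of $P_{n-1}$ is $1$, and the coefficient of $P_{n+1}$ equals $c_{n-1,n-1}/c_{nn}=:a_n\ne 0$, in agreement with the first formula in~\eqref{calcab}. This produces the recurrence $a_nP_{n+1}+b_nP_n+P_{n-1}=\la P_n$ for $n=\overline{1,l-1}$. I would then set $a_l:=1$, keep $b_l$ as a free parameter so that the recurrence determines $P_{l+1}$ and hence a full system~\eqref{eqy}, and check that its Weyl coefficients $\{\tilde M_k\}$ satisfy~\eqref{relcM} with the same $c_{in}$ for $n\le l-1$; solving those relations recursively for the $\tilde M_k$ then yields $\tilde M_k=M_k$ for $k=\overline{1,2l-1}$, while $\tilde M_{2l}$ turns out to be an affine function of $b_l$ whose slope equals $c_{l-1,l-1}^{-1}\langle P_l,\la^{l-1}\rangle=c_{l-1,l-1}^{-1}\ne 0$; hence exactly one $b_l$ gives $\tilde M_{2l}=M_{2l}$. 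This proves existence, and since all of $\{c_{in}\}$ --- and therefore all of $\{a_n\}_{n=1}^{l-1}$, $\{b_n\}_{n=1}^l$ --- were determined uniquely, the solution of Inverse Problem~\ref{ip:Weyl} is unique.

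The step I expect to be the main obstacle is establishing the three-term recurrence in the sufficiency part: a priori the polynomials $P_{n+1}$ only solve the Hankel systems~\eqref{relcM}, and one must show that they satisfy \emph{precisely}~\eqref{eqy} with unit coefficient at $P_{n-1}$, i.e.\ that the seemingly overdetermined system of coefficient equations is automatically consistent. This is the finite, truncated analog of the classical three-term recurrence for orthogonal polynomials; the care required is that the bilinear form, and hence the shift identity and the orthogonality relations, are available only up to degree $l-1$, so they may be invoked only within that range. The coefficient $b_l$, which is not pinned down directly by~\eqref{relcM} because $M_{2l+1}$ is not among the data, then has to be fixed separately by the affine-dependence argument sketched above.
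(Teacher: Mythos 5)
Your proposal is correct, and it works inside the same framework as the paper (everything is driven by the relations~\eqref{relcM} and the Hankel determinants $\Delta_n$), but both halves are executed differently. For necessity, the paper argues by contradiction: if $\Delta_n\ne 0$ and $\Delta_{n+1}=0$, the linear system~\eqref{relcM} would be inconsistent, contradicting the existence of the coefficients $c_{in}$; your factorization argument --- that the product of the Hankel matrix $[M_{i+k+1}]_{i,k=0}^n$ with the coefficient matrix $[c_{im}]$ agrees with the identity on and above the diagonal, hence is triangular with unit diagonal and $\Delta_n\det C_n=1$ --- is a clean direct identity that avoids the induction/contradiction (note only the harmless slip that with $c_{im}=0$ for $i>m$ the matrix $C_n$ is upper, not lower, triangular). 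For sufficiency, the paper extends the triangular array to $n=l$ by setting $c_{ll}:=c_{l-1,l-1}$, $\Delta_l:=\Delta_{l-1}c_{ll}^{-1}$, solving the $n=l$ system by a modified Cramer's rule, and then reading off all $a_n$, $b_n$ from~\eqref{calcab}, leaving the final verification that the resulting Weyl coefficients coincide with the data as a brief remark; you instead stop at $n=l-1$, establish the three-term recurrence~\eqref{eqy} for the constructed polynomials via the Hankel bilinear form $\langle\la^i,\la^j\rangle=M_{i+j+1}$ (which is precisely where the unit coefficient at $y_{n-1}$ comes from), and then fix $b_l$ by the affine, nondegenerate dependence of $\tilde M_{2l}$ on $b_l$ (slope $c_{l-1,l-1}^{-1}$). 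The trade-off: the paper's route is more immediately algorithmic (it is literally Algorithm~\ref{alg:Weyl}), while yours makes explicit the consistency of the seemingly overdetermined recurrence --- the step the paper compresses into ``one can easily show'' --- and gives a transparent reason why exactly one value of $b_l$ matches $M_{2l}$; your recursive argument that $\tilde M_k=M_k$ for $k\le 2l-1$ using $c_{nn}\ne 0$ is essentially the paper's verification step. The terse closing uniqueness claim is fine as stated, since any solution's polynomial coefficients must satisfy~\eqref{relcM}, which under~\eqref{Dcond} pins down the $c_{in}$, hence $a_n$, $b_n$ for $n\le l-1$, and your affine argument pins down $b_l$.
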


\begin{proof}
\textit{Necessity.} Let $\{ M_k \}_{k = 1}^{2l}$ be the Weyl coefficients corresponding to some $\{ a_n \}_{n = 1}^l$ and $\{ b_n \}_{n = 1}^l$. By the above construction, there exist $c_{in}$, $i = \overline{0, n}$, $n = \overline{1, l}$, satisfying \eqref{relcM} and $c_{nn} \ne 0$. Obviously, $\Delta_0 = 1$. Suppose that $\Delta_n \ne 0$ and $\Delta_{n + 1} = 0$ for some $n \in \{ 0, \ldots, l-2 \}$. Then the linear system~\eqref{relcM} is inconsistent for this value of $n$. This contradiction yields the claim.

\textit{Sufficiency.} Suppose that $\Delta_n \ne 0$, $n = \overline{1, l-1}$. Then, for each fixed $n = \overline{1, l-1}$, the linear system \eqref{relcM} is uniquely solvable with respect to $\{ c_{in} \}_{i = 0}^n$. In particular, 
$$
c_{nn} = \frac{\Delta_{n-1}}{\Delta_n} \ne 0, \quad n = \overline{1, l-1}.
$$
Put $c_{00} := 1$, $c_{ll} := c_{l-1,l-1}$, $\Delta_l := \Delta_{l-1} c_{ll}^{-1}$. Find $\{ c_{il} \}_{i = 0}^{l-1}$ from the system~\eqref{relcM} for $n = l$ by Cramer's rule, replacing the determinant of this system by $\Delta_l$. (We cannot directly calculate $\det([M_{i + j - 1}]_{i,j = 1}^{l + 1})$, since $M_{2l+1}$ is unknown). Using \eqref{calcab}, find $\{ a_n \}_{n = 1}^l$ and $\{ b_n \}_{n = 1}^l$. Clearly, $a_n \ne 0$, $n = \overline{1, l-1}$, and $a_l = 1$. It remains to prove that $\{ M_k \}_{k = 1}^{2l}$ are the Weyl coefficients corresponding to $\{ a_n \}_{n = 1}^l$ and $\{ b_n \}_{n = 1}^l$. Suppose that equation~\eqref{eqy} has the Weyl coefficients $\{ \tilde M_k \}_{k = 1}^{2l}$, $\tilde M_l = 1$. Then the relations \eqref{relcM} are valid for $M_n$ replaced by $\tilde M_n$. Using this fact together with $c_{nn} \ne 0$, $n = \overline{1, l}$, one can easily show that $M_k = \tilde M_k$, $k = \overline{1, 2l}$. The described algorithm of unique reconstruction of $\{ a_n \}_{n = 1}^{l-1}$ and $\{ b_n \}_{n = 1}^l$ by $\{ M_k \}_{k = 1}^{2l}$ implies the uniqueness of the inverse problem solution.
\end{proof}

Thus, we have arrived at the following constructive algorithm for solving Inverse Problem~\ref{ip:Weyl}.

\begin{alg} \label{alg:Weyl}
Let $\{ M_k \}_{k = 1}^{2l}$, $M_1 = 1$ be given. We have to find $\{ a_n \}_{n = 1}^{l-1}$ and $\{ b_n \}_{n = 1}^l$.

\begin{enumerate}
    \item For each $n = \overline{1, l-1}$, find $\{ c_{in} \}_{i = 0}^n$ by solving the system~\eqref{relcM}.
    \item Put $c_{00} := 1$, $c_{ll} := c_{l-1,l-1}$, $\Delta_l := \Delta_{l-1} c_{ll}^{-1}$, and find $\{ c_{il} \}_{i= 0}^{l-1}$ by solving the system~\eqref{relcM} for $n = l$, replacing its determinant by $\Delta_l$.
    \item Find $\{ a_n \}_{n = 1}^{l-1}$ and $\{ b_n \}_{n = 1}^l$ by~\eqref{calcab}.
\end{enumerate}
\end{alg}

Using Theorem~\ref{thm:nscWeyl} and Algorithm~\ref{alg:Weyl}, we easily obtain the following theorem on local solvability and stability of Inverse Problem~\ref{ip:Weyl}.

\begin{thm} \label{thm:locWeyl}
Let $\{ M_k \}_{k = 1}^{2l}$ be the Weyl coefficients corresponding to $\{ a_n \}_{n = 1}^l$ and $\{ b_n \}_{n = 1}^l$, $a_n \ne 0$, $n = \overline{1, l-1}$, $a_l = 1$. Then there exists $\eps > 0$ (depending on $\{ a_n \}_{n = 1}^l$ and $\{ b_n \}_{n = 1}^l$) such that, for any complex numbers $\{ \tilde M_k \}_{k = 2}^{2l}$ satisfying the estimate
$$
\de := \max\limits_{k = \overline{2, 2l}} |M_k - \tilde M_k| \le \eps,
$$
there exist unique complex numbers $\{ \tilde a_n \}_{n = 1}^l$ and $\{ \tilde b_n \}_{n = 1}^l$ such that $\tilde a_n \ne 0$, $n = \overline{1,l-1}$, $\tilde a_l = 1$, and $\{ \tilde M_k \}_{k = 2}^{2l}$ are the corresponding Weyl coefficients. Moreover, the following estimates hold:
$$
    |a_n - \tilde a_n| \le C \de, \quad |b_n - \tilde b_n| \le C \de, \quad n = \overline{1, l},
$$
where the constant $C$ depends only on $\{ a_n \}_{n = 1}^l$ and $\{ b_n \}_{n = 1}^l$.
\end{thm}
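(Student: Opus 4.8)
The plan is to deduce the statement entirely from Theorem~\ref{thm:nscWeyl} and the explicit formulas collected in Algorithm~\ref{alg:Weyl}, using only the structural fact that every quantity computed there is a rational function of $M_2,\dots,M_{2l}$ (with $M_1=1$ fixed) whose denominator is a product of powers of the Hankel determinants $\Delta_1,\dots,\Delta_{l-1}$. First I would record that, since $\{M_k\}_{k=1}^{2l}$ are the Weyl coefficients of $\{a_n\},\{b_n\}$, the necessity part of Theorem~\ref{thm:nscWeyl} gives $\Delta_n\ne0$ for $n=\overline{1,l-1}$. For perturbed data one sets $\tilde M_1:=1=M_1$ (every family of Weyl coefficients has leading coefficient $1$ by \eqref{seriesM}), so that each $\tilde\Delta_n=\det([\tilde M_{i+j-1}]_{i,j=1}^{n+1})$ is a polynomial in the finitely many variables $\tilde M_2,\dots,\tilde M_{2l-1}$ and differs from $\Delta_n$ by $O(\de)$ as $\de\to0$. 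Hence there is $\eps>0$, depending only on $\{M_k\}$ (equivalently on $\{a_n\},\{b_n\}$), such that $\de\le\eps$ forces $\tilde\Delta_n\ne0$ for all $n=\overline{1,l-1}$. By the sufficiency part of Theorem~\ref{thm:nscWeyl} there then exist $\{\tilde a_n\}_{n=1}^l$, $\{\tilde b_n\}_{n=1}^l$ with $\tilde a_n\ne0$ for $n=\overline{1,l-1}$, $\tilde a_l=1$, whose Weyl coefficients are $\{\tilde M_k\}_{k=1}^{2l}$; and the uniqueness of such a collection is the concluding assertion of Theorem~\ref{thm:nscWeyl}.

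For the quantitative estimates I would follow the three steps of Algorithm~\ref{alg:Weyl}. In Step~1, solving \eqref{relcM} by Cramer's rule gives $c_{in}=D_{in}/\Delta_n$ with $D_{in}$ a determinant whose entries are among the $M_k$, and likewise $\tilde c_{in}=\tilde D_{in}/\tilde\Delta_n$; after possibly shrinking $\eps$ so that $|\tilde\Delta_n|\ge\frac12|\Delta_n|$ and $|\tilde M_k|\le|M_k|+\eps$, both the numerators and the denominators are Lipschitz in the data on that bounded set, whence $|c_{in}-\tilde c_{in}|\le C\de$. The same reasoning applies in Step~2 to $\{c_{il}\}_{i=0}^{l-1}$, whose Cramer denominator is $\Delta_l=\Delta_{l-1}c_{ll}^{-1}=\Delta_{l-1}^2/\Delta_{l-2}$, again bounded away from $0$; one also gets $|c_{nn}-\tilde c_{nn}|\le C\de$ and, by \eqref{Dcond}, $|\tilde c_{nn}|\ge\frac12|c_{nn}|>0$ for $n=\overline{1,l}$. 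Finally, in Step~3 the formulas \eqref{calcab} present $a_n,b_n$ as rational functions of the $c_{in}$ with denominators $c_{nn}$, $c_{n-1,n-1}$ that are bounded below; a last application of local Lipschitz continuity of these maps yields $|a_n-\tilde a_n|\le C\de$ and $|b_n-\tilde b_n|\le C\de$ for $n=\overline{1,l}$, with $C$ depending only on $\{a_n\},\{b_n\}$.

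I expect no essential obstacle here: the only delicate part is bookkeeping of constants. Concretely, one has to fix a single $\eps$ that simultaneously (i) keeps all $\tilde\Delta_n$, $n=\overline{1,l-1}$, nonzero, (ii) keeps $\tilde\Delta_l$ and all $\tilde c_{nn}$, $n=\overline{1,l}$, away from zero, and (iii) confines $\{\tilde M_k\}$ to a fixed bounded neighborhood of $\{M_k\}$; each of these is a finite collection of continuity and nonvanishing requirements, so such an $\eps$ exists and depends only on the unperturbed coefficients. Once that is done, the composition of the finitely many rational maps $\{\tilde M_k\}\mapsto\{\tilde c_{in}\}\mapsto(\{\tilde a_n\},\{\tilde b_n\})$ is Lipschitz on the resulting compact neighborhood with a constant determined solely by $\{a_n\},\{b_n\}$, which delivers all the claimed estimates at once.
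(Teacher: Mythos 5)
Your proposal is correct and takes essentially the route the paper intends: the paper gives no separate proof of Theorem~\ref{thm:locWeyl}, asserting only that it follows ``easily'' from Theorem~\ref{thm:nscWeyl} and Algorithm~\ref{alg:Weyl}, and your argument (setting $\tilde M_1=1$, keeping the perturbed Hankel determinants $\tilde\Delta_n$ nonzero for small $\de$ to invoke the sufficiency and uniqueness of Theorem~\ref{thm:nscWeyl}, then tracking Lipschitz continuity of the Cramer-rule formulas for $c_{in}$ and of \eqref{calcab}) is exactly that elaboration, with the constants correctly depending only on $\{a_n\},\{b_n\}$.
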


\section{Main results} \label{sec:main}

In this section, we investigate Inverse Problems~\ref{ip:psi}-\ref{ip:R}. The uniqueness of solution, global solvability, local solvability, stability are proved, and also reconstruction algorithms are obtained for these two inverse problems.

Denote by $[\vv_n]_{n = 0}^{l + 1}$ and $[\vv_n^{\bullet}]_{n = 0}^{l + 1}$ the solutions of equations~\eqref{eqpsi} and~\eqref{eqtpsi}, respectively, satisfying the initial conditions $\vv_0(\la) = \vv_0^{\bullet}(\la) = 0$, $\vv_1(\la) = \vv_1^{\bullet}(\la) = 1$. Clearly, the eigenvalues of the boundary value problem~\eqref{eqpsi}-\eqref{bcpsi} coincide with the zeros of the characteristic function
$$
D(\la) = \vv_l(\la) \vv_{l + 1}^{\bullet}(\la) - \vv_{l + 1} \vv_l^{\bullet}(\la).
$$
By induction, we show that
\begin{equation} \label{asymptphi}
\vv_n(\la) \sim \left( \prod_{k = 1}^{n-1} \frac{1}{\al_k} \right) \la^{n-1}, \quad
\vv_n^{\bullet}(\la) \sim \left( \prod_{k = 1}^{n-1} \frac{1}{\al_k^{\bullet}} \right) \la^{n-1}, \quad n = \overline{1, l + 1}.
\end{equation}
Consequently,
$$
D(\la) \sim \left( \prod_{k = 1}^{l} \frac{1}{\al_k \al_k^{\bullet}}\right) (\al_l^{\bullet} - \al_l) \la^{2l-1}.
$$
Recall that $\al_k \ne 0$, $\al_k^{\bullet} \ne 0$, $k = \overline{1, l}$, and $\al_l \ne \al_l^{\bullet}$. Therefore, the problem~\eqref{eqpsi}-\eqref{bcpsi} has exactly $(2l-1)$ eigenvalues $\{ \la_j \}_{j = 1}^{2l-1}$ (counting with multiplicities).

Denote
\begin{gather} \nonumber
    d_{l + 1} := 1, \quad d_n := \al_n^{-1} d_{n + 1}, \quad n = l, l-1, \ldots, 1, \\ \nonumber
    a_{l + 1 - n} := d_n \al_n d_{n-1}^{-1}, \quad n = \overline{2,l}, \quad a_l := 1,  \\ \nonumber
    b_{l + 1 - n} := \beta_n, \quad
    y_{l + 1- n} = d_n \psi_n, \quad n = \overline{1, l}, \\ \label{defR}
    R_0(\la) := \al_l \vv_{l + 1}^{\bullet}(\la), \quad R_1(\la) = \vv_l^{\bullet}(\la).
\end{gather}
This change of variables reduces the eigenvalue problem~\eqref{eqpsi}-\eqref{bcpsi} to the equivalent form~\eqref{eqy}-\eqref{bcR}. On the other hand, if the coefficients $\{ a_n \}_{n = 1}^l$, $\{ b_n \}_{n = 1}^l$, and $\al_l$ are known, one can find $\{ \al_n \}_{n = 1}^{l-1}$ and $\{ \beta_n\}_{n = 1}^l$ by using the formulas
\begin{gather} \label{calcd}
    d_{l + 1} := 1, \quad d_l := \al_l^{-1}, \quad
    d_{n - 1} := d_{n + 1} a_{l + 1 - n}^{-1}, \quad n = l, l-1, \ldots, 2, \\ \label{calcalbe}
    \al_n := d_n^{-1} d_{n + 1}, \quad \beta_n := b_{l + 1 - n}, \quad n = \overline{1, l}.
\end{gather}

It follows from \eqref{asymptphi}-\eqref{defR} and the condition $\al_l \ne \al_l^{\bullet}$ that
\begin{gather} \label{asymptR}
    R_0(\la) \sim r_0 \la^l, \quad R_1(\la) \sim r_1 \la^{l-1}, \quad r_0 \ne r_1, \quad r_j \ne 0, \: j = 0, 1, \\ \label{r01}
    r_0 = \frac{\al_l}{\al_l^{\bullet}} r_1, \quad r_1 = \prod_{k = 1}^{l-1} \frac{1}{\al_k^{\bullet}}.
\end{gather}

Let us consider the problem~\eqref{eqy}-\eqref{bcR} in the general from with arbitrary relatively prime polynomials $R_0(\la)$ and $R_1(\la)$ satisfying~\eqref{asymptR}. The eigenvalues of \eqref{eqy}-\eqref{bcR} coincide with the zeros of the characteristic function
\begin{equation} \label{defE}
E(\la) = R_0(\la) v_1(\la) - R_1(\la) v_0(\la).
\end{equation}
Since $v_n \sim \la^{l-n}$, we have
\begin{equation} \label{asymptE}
E(\la) \sim (r_0 - r_1) \la^{2l-1}.
\end{equation}
Hence, under our assumptions, the problem \eqref{eqy}-\eqref{bcR} has exactly $(2l-1)$ eigenvalues $\{ \la_j \}_{j = 1}^{2l-1}$ (counting with multiplicities). If \eqref{defR} holds, then $\{ \la_j \}_{j = 1}^{2l-1}$ coincide with the eigenvalues of the problem~\eqref{eqpsi}-\eqref{bcpsi}. Clearly, the polynomial $E(\la)$ can be constructed by its zeros as follows:
\begin{equation} \label{prodE}
E(\la) = (r_0 - r_1) \prod_{j = 1}^{2l-1} (\la - \la_j).
\end{equation}

\begin{lem} \label{lem:uniqv}
Suppose that $R_0(\la)$, $R_1(\la)$, $E(\la)$ are arbitrary polynomials satisfying~\eqref{asymptR}, \eqref{asymptE}, and $R_0(\la)$, $R_1(\la)$ are relatively prime. Then, there exist unique polynomials $v_j(\la) \sim \la^{l-j}$, $j = 0, 1$, satisfying~\eqref{defE}.
\end{lem}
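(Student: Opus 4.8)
The plan is to turn the polynomial identity \eqref{defE} into a square linear system for the coefficients of $v_0$ and $v_1$. I would write $v_0(\la) = \la^l + \sum_{i=0}^{l-1} p_i \la^i$ and $v_1(\la) = \la^{l-1} + \sum_{i=0}^{l-2} q_i \la^i$, so that the normalization $v_j \sim \la^{l-j}$ is built in and the $2l-1$ scalars $p_0,\dots,p_{l-1},q_0,\dots,q_{l-2}$ are the unknowns. Substituting into $E = R_0 v_1 - R_1 v_0$ and equating the coefficients of $\la^0,\dots,\la^{2l-1}$ produces $2l$ affine-linear equations. However, since $R_0\sim r_0\la^l$, $v_1\sim\la^{l-1}$, $R_1\sim r_1\la^{l-1}$, $v_0\sim\la^l$, the coefficient of $\la^{2l-1}$ in $R_0 v_1 - R_1 v_0$ equals $r_0 - r_1$, which by \eqref{asymptE} is exactly the leading coefficient of $E$; hence that top equation holds identically, leaving a linear system of $2l-1$ equations in $2l-1$ unknowns.

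Next I would establish uniqueness, which here amounts to invertibility of this system. If $(v_0^0,v_1^0)$ solves the associated homogeneous system, then $\deg v_0^0 \le l-1$, $\deg v_1^0 \le l-2$, and $R_0 v_1^0 = R_1 v_0^0$. Since $R_0$ and $R_1$ are relatively prime, $R_1$ divides $v_1^0$; but $\deg R_1 = l-1 > l-2 \ge \deg v_1^0$, forcing $v_1^0 \equiv 0$, and then $v_0^0 \equiv 0$ because $R_1 \not\equiv 0$ (recall $r_1 \ne 0$). Thus the coefficient matrix is nonsingular, so the system has a unique solution, yielding a unique pair $(v_0,v_1)$; by construction $\deg v_0 = l$ and $\deg v_1 = l-1$, i.e. $v_j \sim \la^{l-j}$, as required.

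The only point I expect to need careful writing is the degree bookkeeping in the first step — namely, verifying that the equation coming from the $\la^{2l-1}$-coefficient is an automatic consequence of \eqref{asymptR}, \eqref{asymptE} and the chosen normalization, and is not an additional constraint; everything after that is a routine coprimality-and-degree argument. If an explicit construction is preferred instead, one can start from an extended-Euclidean identity $R_0 A - R_1 B = 1$, take $(EB, EA)$, reduce $EA$ modulo $R_1$ to lower its degree to at most $l-2$ (adjusting the first component accordingly), and then add a scalar multiple of $(R_0,R_1)$ — the scalar being forced to be $r_1^{-1}$ — to restore the prescribed leading terms; a short computation using \eqref{asymptE} confirms the leading coefficients come out to $1$, and uniqueness again follows from coprimality exactly as above.
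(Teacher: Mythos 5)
Your proof is correct, but it follows a genuinely different route from the paper. You fix the monic normalization, equate coefficients in \eqref{defE}, observe that the $\la^{2l-1}$ equation is automatic by \eqref{asymptR}--\eqref{asymptE}, and then prove nonsingularity of the resulting square (Sylvester-type) system by the kernel argument: a homogeneous solution gives $R_0 v_1^0 = R_1 v_0^0$ with $\deg v_1^0 \le l-2$, coprimality forces $R_1 \mid v_1^0$, and the degree count kills both $v_1^0$ and $v_0^0$. The paper instead argues pointwise at the zeros $\xi_{n,j}$ of $R_j$: since $R_{1-j}(\xi_{n,j}) \ne 0$, the identity \eqref{defE} forces the values $v_j^{<\nu>}(\xi_{n,j})$, $\nu = \overline{0, m_{n,j}-1}$, recursively, and the monic polynomials $v_j \sim \la^{l-j}$ are then recovered by Hermite interpolation. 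Your argument in effect proves directly that the linear system \eqref{sysv} (which the paper introduces right after the lemma and whose unique solvability it deduces \emph{from} the lemma) has trivial kernel, so it dovetails nicely with the computational scheme actually used in Algorithm 3.3; the paper's root-evaluation proof is constructive in a different way and makes the role of relative primality visible through the nonvanishing of $R_{1-j}$ at the roots of $R_j$. Your alternative sketch via the extended Euclidean identity, reduction of $EA$ modulo $R_1$, and the correction by $r_1^{-1}(R_0, R_1)$ also checks out, including the leading-coefficient computation; it is essentially a B\'ezout-identity reformulation of the same coprimality fact. No gaps.
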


\begin{proof}
Denote by $\{ \xi_{n,j} \}_{n = 1}^{l-j}$ the zeros of the polynomial $R_j(\la)$, $j = 0, 1$ (counting with multiplicities). Introduce the notations
\begin{gather*}
S_j := \{ 1 \} \cup \{ n = \overline{2,l-j} \colon \xi_{n,j} \ne \xi_{n-1,j} \}, \\
m_{n,j} := \# \{ k = \overline{1, l-j} \colon \xi_{k,j} = \xi_{n,j} \}, \quad n \in S_j, \quad j = 0, 1,
\end{gather*}
that is, $S_j$ is the set of indices of all the distinct values among $\{ \xi_{n,j} \}_{n= 1}^{l-j}$ and $m_{n,j}$ is the multiplicity of $\xi_{n,j}$. Denote
$$
f^{<\nu>}(\la) = \frac{1}{\nu!} \frac{d^{\nu} f(\la)}{d\la^{\nu}}, \quad \nu \ge 0.
$$

Using \eqref{defE}, we obtain
$$
E^{<\nu>}(\xi_{n,j}) = (-1)^{1-j} \sum_{s = 0}^{\nu} R_{1-j}^{<\nu - s>}(\xi_{n,j}) v_j^{<s>}(\xi_{n,j}), \quad n \in S_j, \quad \nu = \overline{0, m_{n,j}-1}, \quad j = 0, 1.
$$
Hence
\begin{gather} \label{findv}
v_j^{<\nu>}(\xi_{n,j}) = R_{1-j}^{-1}(\xi_{n,j}) \left( (-1)^{1-j} E^{<\nu>}(\xi_{n,j}) - \sum_{s = 0}^{\nu - 1} R_{1-j}^{<\nu - s>}(\xi_{n,j}) v_j^{<s>}(\xi_{n,j})\right), \\ \nonumber
 n \in S_j, \quad \nu = \overline{0, m_{n,j}-1}, \quad j = 0, 1.
\end{gather}
Note that $R_{1-j}(\xi_{n,j}) \ne 0$, since $R_0(\la)$ and $R_1(\la)$ do not have common roots. Clearly, the polynomials $v_j(\la) \sim \la^{l-j}$ can be uniquely constructed by the values $\{ v_j^{<\nu>}(\xi_{n,j}) \}_{n \in S_j, \, \nu = \overline{0, m_{n,j}-1}}$, $j = 0, 1$, by using the Hermite interpolation.
\end{proof}

Although the proof of Lemma~\ref{lem:uniqv} is constructive, it is more convenient to use another method for finding $v_0(\la)$ and $v_1(\la)$. Represent the polynomials appearing in \eqref{defE} in the form
\begin{equation} \label{poly}
    E(\la) = \sum_{s = 0}^{2l-1} e_s \la^s, \quad
    R_j(\la) = \sum_{n = 0}^{l-j} r_{n,j} \la^n, \quad
    v_j(\la) = \sum_{k = 0}^{l-j} v_{k,j} \la^k, \quad j = 0, 1.
\end{equation}
Note that
\begin{equation} \label{lead}
    e_{2l-1} = r_0 - r_1, \quad r_{l-j, j} = r_j, \quad v_{l-j,j} = 1, \quad j = 0, 1.
\end{equation}
Substituting \eqref{poly} into~\eqref{defE} and taking~\eqref{lead} into account, we obtain the following system of $(2l-1)$ linear equations
\begin{equation} \label{sysv}
    \sum_{n + k = s} r_{n,0} v_{k,1} - \sum_{n + k = s} r_{n,1} v_{k,0} = e_s, \quad s = \overline{0, 2l-2},
\end{equation}
with respect to the $(2l-1)$ unknown values $\{ v_{k, j} \}_{k = 0}^{l-j-1}$, $j = 0, 1$. By virtue of Lemma~\ref{lem:uniqv}, the system~\eqref{sysv} is uniquely solvable, so its determinant is non-zero. Solving \eqref{sysv}, one can find $\{ v_{k, j} \}_{k = 0}^{l-j-1}$, $j = 0, 1$ and construct the polynomials $v_0(\la)$, $v_1(\la)$.

It follows from \eqref{PhiPQ} and \eqref{Phiv} that
$M(\la) = \dfrac{v_1(\la)}{v_0(\la)}$.
Therefore, using $v_j(\la)$, $j = 0, 1$, one can construct the Weyl function $M(\la)$ and the Weyl coefficients $\{ M_k \}_{k = 1}^{2l}$, and then solve Inverse Problem~\ref{ip:Weyl} to determine $\{ a_n \}_{n = 1}^{l-1}$ and $\{ b_n \}_{n = 1}^l$. Thus, we have arrived at the following algorithm for solving Inverse Problem~\ref{ip:R}.

\begin{alg} \label{alg:R}
Suppose that the relatively prime polynomials $R_j(\la)$, $j = 0, 1$, satisfying~\eqref{asymptR} and the complex numbers $\{ \la_j \}_{j = 1}^{2l-1}$ are given. We have to find $\{ a_n \}_{n = 1}^{l-1}$ and $\{ b_n \}_{n = 1}^l$.

\begin{enumerate}
    \item Construct $E(\la)$ by~\eqref{prodE}.
    \item Put $v_{l-j,j} = 1$, $j = 0, 1$, find $\{ v_{k, j} \}_{k = 0}^{l-j-1}$, $j = 0, 1$, by solving the linear system~\eqref{sysv}, and construct the polynomials $v_0(\la)$, $v_1(\la)$ by \eqref{poly}.
    \item Construct the coefficients $\{ M_k \}_{k = 1}^{2l}$ in the expansion~\eqref{seriesM} of the function $M(\la) = \dfrac{v_1(\la)}{v_0(\la)}$.
    \item Apply Algorithm~\ref{alg:Weyl} to find $\{ a_n \}_{n = 1}^{l-1}$ and $\{ b_n \}_{n = 1}^l$ by $\{ M_k \}_{k = 1}^{2l}$.
\end{enumerate}
\end{alg}

\begin{remark}
For definiteness, we assume throughout the paper that $\al_l \ne \al_l^{\bullet}$, $r_0 \ne r_1$. However, our results are also valid for the opposite case with some technical modifications.
Indeed, if $\al_l = \al_l^{\bullet}$ and $r_0 = r_1$, then $\deg(E) < 2l-1$, and the number $p$ of the eigenvalues $\{ \la_j \}_{j = 1}^p$ is less than $(2l-1)$. In this case, the polynomial $E(\la)$ has the form
\begin{equation} \label{prodEc}
    E(\la) = c \prod_{j = 1}^p (\la - \la_j)
\end{equation}
instead of \eqref{prodE}. The constant $c \ne 0$ should be added to the initial data of the inverse problem. Thus, given the eigenvalues $\{ \la_j \}_{j = 1}^p$ and the constant $c$, one can use \eqref{prodEc} to construct $E(\la)$. It is easy to check that Lemma~\ref{lem:uniqv}, Algorithm~\ref{alg:R}, and the subsequent results remain valid with necessary technical changes.  
\end{remark}

Using Theorem~\ref{thm:nscWeyl}, we immediately obtain necessary and sufficient conditions of solvability for Inverse Problem~\ref{ip:R} together with the uniqueness of solution.

\begin{thm} \label{thm:nscR}
Suppose that $R_0(\la)$ and $R_1(\la)$ are relatively prime polynomials satisfying~\eqref{asymptR}. Then, for complex numbers $\{ \la_j \}_{j = 1}^{2l-1}$ to be the eigenvalues of a problem \eqref{eqy}-\eqref{bcR}, it is necessary and sufficient that the numbers $\{ M_k \}_{k = 1}^{2l-1}$ constructed by steps 1-3 of Algorithm~\ref{alg:R} fulfill the condition \eqref{Dcond}. Under the latter condition, the solution of Inverse Problem~\ref{ip:R} is unique.
\end{thm}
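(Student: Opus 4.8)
The plan is to show that Inverse Problem~\ref{ip:R} is equivalent to Inverse Problem~\ref{ip:Weyl} via the bijective correspondence between the spectral data $\{\la_j\}_{j=1}^{2l-1}$ (together with the fixed polynomials $R_0,R_1$) and the Weyl coefficients $\{M_k\}_{k=1}^{2l}$, and then invoke Theorem~\ref{thm:nscWeyl}. The key point is that steps 1--3 of Algorithm~\ref{alg:R} define a map $\{\la_j\}_{j=1}^{2l-1}\mapsto\{M_k\}_{k=1}^{2l-1}$ that is well-defined purely algebraically, regardless of whether the $\la_j$ actually arise from a genuine problem~\eqref{eqy}--\eqref{bcR}: indeed, $E(\la)$ is built from~\eqref{prodE}, then $v_0,v_1$ from the uniquely solvable system~\eqref{sysv} (its determinant is nonzero by Lemma~\ref{lem:uniqv}, and this determinant depends only on $R_0,R_1$, hence is fixed), and then the $M_k$ are the Laurent coefficients of $M(\la)=v_1(\la)/v_0(\la)$. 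Note $M_1=1$ automatically because $v_{l-1,1}=v_{l,0}=1$ by~\eqref{lead}, so the leading behavior gives $M(\la)=\la^{-1}+O(\la^{-2})$ as in~\eqref{asymptPhi}. Thus $\{M_k\}_{k=1}^{2l-1}$ is a legitimate object to test condition~\eqref{Dcond} against (note $\Delta_n$ for $n=\overline{1,l-1}$ only involves $M_1,\dots,M_{2l-1}$, so $M_{2l}$ is not needed).

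\textit{Necessity.} Suppose $\{\la_j\}_{j=1}^{2l-1}$ are the eigenvalues of some problem~\eqref{eqy}--\eqref{bcR} with coefficients $\{a_n\}_{n=1}^{l}$, $\{b_n\}_{n=1}^{l}$, $a_n\ne 0$ for $n=\overline{1,l-1}$, $a_l=1$. For this problem the solution $[v_n]_{n=0}^{l+1}$ with $v_{l+1}=0$, $v_l=1$ exists, $v_j(\la)\sim\la^{l-j}$, and $E(\la)$ given by~\eqref{defE} has these $\la_j$ as its zeros with the correct leading coefficient $r_0-r_1$ by~\eqref{asymptE}; hence $E(\la)$ coincides with the polynomial~\eqref{prodE} constructed in step~1. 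By the uniqueness in Lemma~\ref{lem:uniqv}, the polynomials $v_0,v_1$ recovered in step~2 coincide with the true ones, so $M(\la)=v_1(\la)/v_0(\la)$ is the true Weyl function and $\{M_k\}_{k=1}^{2l}$ are its genuine Weyl coefficients for $\{a_n\},\{b_n\}$. Then the necessity part of Theorem~\ref{thm:nscWeyl} gives~\eqref{Dcond}.

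\textit{Sufficiency.} Conversely, assume the numbers $\{M_k\}_{k=1}^{2l-1}$ produced by steps 1--3 satisfy~\eqref{Dcond}. Apply step~4 (Algorithm~\ref{alg:Weyl}): by the sufficiency part of Theorem~\ref{thm:nscWeyl} we obtain coefficients $\{a_n\}_{n=1}^{l}$, $\{b_n\}_{n=1}^{l}$ with $a_n\ne0$ for $n=\overline{1,l-1}$, $a_l=1$, whose Weyl function $\widehat M(\la)$ has exactly the prescribed coefficients $\{M_k\}_{k=1}^{2l}$ (the value $M_{2l}$ being the one determined internally by the algorithm). Now I must check that the problem~\eqref{eqy}--\eqref{bcR} with these coefficients and the given $R_0,R_1$ has precisely $\{\la_j\}_{j=1}^{2l-1}$ as eigenvalues. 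Write $\widehat M(\la)=\widehat v_1(\la)/\widehat v_0(\la)$ with $\widehat v_j=v_{0}(\la)$-type polynomials normalized as in~\eqref{Phiv}; since $\widehat M$ has the same first $2l$ Laurent coefficients as the $M(\la)=v_1/v_0$ from step~3, and both numerators/denominators have degrees $l-1$ and $l$ with leading coefficient $1$, the uniqueness argument of Lemma~\ref{lem:Weyl} (or a direct degree count: $\widehat v_1 v_0 - v_1 \widehat v_0$ has degree $\le 2l-1$ but vanishes to order $2l$ at infinity) forces $\widehat v_j\equiv v_j$, $j=0,1$. Hence the characteristic function $E(\la)=R_0(\la)\widehat v_1(\la)-R_1(\la)\widehat v_0(\la)$ equals the polynomial from step~1, whose zeros are exactly $\{\la_j\}_{j=1}^{2l-1}$. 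Finally, uniqueness of the solution of Inverse Problem~\ref{ip:R} follows from the uniqueness in Theorem~\ref{thm:nscWeyl} together with the fact that steps 1--3 are deterministic, so the whole reconstruction chain is single-valued.

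The main obstacle, and the step needing the most care, is the matching argument in the sufficiency direction: one must verify that the $v_0,v_1$ reconstructed from the spectral data in step~2 genuinely coincide with the $v_0,v_1$ (equivalently, the Weyl function) associated to the coefficients output by Algorithm~\ref{alg:Weyl}. This is where the relative primeness of $R_0,R_1$ and the precise normalizations in~\eqref{lead} are essential — they guarantee both that the linear system~\eqref{sysv} is nonsingular and that the degree/leading-coefficient bookkeeping closes up, so that equality of $2l$ Laurent coefficients of a ratio of polynomials of degrees $l-1$ and $l$ forces the polynomials themselves to agree.
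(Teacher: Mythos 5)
Your route is the same as the paper's: Theorem~\ref{thm:nscR} is obtained there directly by combining the deterministic steps 1--3 of Algorithm~\ref{alg:R} (construction of $E$ by \eqref{prodE}, of $v_0,v_1$ via Lemma~\ref{lem:uniqv} and \eqref{sysv}, and of $M=v_1/v_0$) with Theorem~\ref{thm:nscWeyl}; your necessity and uniqueness arguments fill in those details correctly, including the observation that only $M_1,\dots,M_{2l-1}$ enter \eqref{Dcond}.

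There is, however, one incomplete step in your sufficiency argument, precisely at the place you flag as delicate. Equality of the first $2l$ Laurent coefficients of $\widehat M=\widehat v_1/\widehat v_0$ and $M=v_1/v_0$ gives, by the degree count (or the argument of Lemma~\ref{lem:Weyl}), only $\widehat v_1 v_0-v_1\widehat v_0\equiv 0$, i.e. equality of the rational functions; this by itself does not force $\widehat v_j\equiv v_j$ as polynomials, even with the monic degree normalizations (e.g. $v_0=\la^2$, $v_1=\la$ and $\widehat v_0=\la^2-\la$, $\widehat v_1=\la-1$ have equal ratios). The ingredient you need — and do not invoke — is that the genuine polynomials $\widehat v_0,\widehat v_1$ of the reconstructed problem are relatively prime: if $\widehat v_0(\la_0)=\widehat v_1(\la_0)=0$, then, since $a_n\ne 0$ for $n=\overline{1,l-1}$, the recursion \eqref{eqy} propagates the zero upward and yields $\widehat v_l(\la_0)=0$, contradicting $\widehat v_l\equiv 1$. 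With this coprimality, $\widehat v_1 v_0=v_1\widehat v_0$ forces $\widehat v_0\mid v_0$, hence $\widehat v_0=v_0$ and $\widehat v_1=v_1$ by the monic degree bookkeeping, and then $R_0\widehat v_1-R_1\widehat v_0=E$, so the reconstructed problem has exactly the eigenvalues $\{\la_j\}_{j=1}^{2l-1}$. (The relative primeness of $R_0,R_1$ and the normalizations \eqref{lead}, which you credit at this point, are used elsewhere — for the solvability of \eqref{sysv} and for $M_1=1$ — but they are not what closes this particular step.) With this one addition your proof is complete.
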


Using the reduction of the problem~\eqref{eqpsi}-\eqref{bcpsi} to \eqref{eqy}-\eqref{bcR}, we obtain the algorithm for solving the discrete inverse transmission eigenvalue problem (Inverse Problem~\ref{ip:psi}).

\begin{alg} \label{alg:psi}
Suppose that $\{ \al_n^{\bullet} \}_{n = 1}^l$, $\{ \beta_n^{\bullet}\}_{n = 1}^l$, $\al_l$, and $\{ \la_j \}_{j = 1}^{2l-1}$ are given. We have to find $\{ \al_n \}_{n = 1}^{l-1}$ and $\{ \beta_n \}_{n = 1}^l$.

\begin{enumerate}
    \item Find the solution $[\vv_n^{\bullet}]_{n = 0}^{l + 1}$ of equation~\eqref{eqtpsi} satisfying the initial conditions $\vv_0^{\bullet} = 0$, $\vv_1^{\bullet} = 1$.
    \item Construct the numbers $r_0$, $r_1$ by \eqref{r01} and the polynomials $R_0(\la)$, $R_1(\la)$ by \eqref{defR}.
    \item Using $R_j(\la)$, $j = 0, 1$, and $\{ \la_j \}_{j = 1}^{2l-1}$, construct $\{ a_n \}_{n = 1}^{l-1}$ and $\{ b_n \}_{n = 1}^l$ by Algorithm~\ref{alg:R}.
    \item Find $\{ \al_n \}_{n = 1}^{l-1}$ and $\{ \beta_n \}_{n = 1}^l$ by \eqref{calcd}-\eqref{calcalbe}.
\end{enumerate}
\end{alg}

Relying on Algorithm~\ref{alg:psi} and Theorems~\ref{thm:nscWeyl} and \ref{thm:nscR}, we obtain necessary and sufficient conditions of solvability and the uniqueness of solution for Inverse Problem~\ref{ip:psi}.

\begin{thm} \label{thm:nscpsi}
Suppose that $\{ \al_n^{\bullet} \}_{n = 1}^l$, $\{ \beta_n^{\bullet} \}_{n = 1}^l$, and $\al_l$ are fixed complex numbers, $\al_n^{\bullet} \ne 0$, $n = \overline{1, l}$, $\al_l \ne 0$, $\al_l \ne \al_l^{\bullet}$. Then, for complex numbers $\{ \la_j \}_{j = 1}^{2l-1}$ to be the eigenvalues of a problem \eqref{eqpsi}-\eqref{bcpsi}, it is necessary and sufficient that the numbers $\{ M_k \}_{k = 1}^{2l-1}$ constructed by steps~1-3 of Algorithm~\ref{alg:psi} and by steps 1-3 of Algorithm~\ref{alg:R} fulfill the condition \eqref{Dcond}. Under the latter condition, the solution of Inverse Problem~\ref{ip:psi} is unique.
\end{thm}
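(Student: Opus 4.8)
Theorem~\ref{thm:nscpsi} is a reformulation of Theorem~\ref{thm:nscR} through the explicit change of variables \eqref{defR}--\eqref{calcalbe}, so the plan is to check that this change of variables sets up a bijection between problems of the form \eqref{eqpsi}--\eqref{bcpsi} with the prescribed $\{\al_n^{\bullet}\}$, $\{\be_n^{\bullet}\}$, $\al_l$ and problems of the form \eqref{eqy}--\eqref{bcR} with the specific polynomials $R_0(\la)=\al_l\vv_{l+1}^{\bullet}(\la)$, $R_1(\la)=\vv_l^{\bullet}(\la)$, in such a way that the two problems share the same spectrum. First I would note that $\{\al_n^{\bullet}\}$, $\{\be_n^{\bullet}\}$, $\al_l$ determine $\vv_n^{\bullet}(\la)$ (hence $R_0$, $R_1$) uniquely via step~1 of Algorithm~\ref{alg:psi}; conversely these polynomials, being fixed data, are computed once and for all. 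Then I would verify the two directions: given any admissible $\{\al_n\}_{n=1}^{l-1}$, $\{\be_n\}_{n=1}^{l}$ for \eqref{eqpsi}--\eqref{bcpsi}, the formulas \eqref{defR} (the $d_n$, $a_n$, $b_n$, $y_n$ definitions) produce $\{a_n\}_{n=1}^{l-1}$, $\{b_n\}_{n=1}^{l}$ with $a_n\ne0$, $a_l=1$, satisfying \eqref{eqy}--\eqref{bcR} for the same $\la$; and conversely, given admissible $\{a_n\}$, $\{b_n\}$, the inverse formulas \eqref{calcd}--\eqref{calcalbe} recover $\{\al_n\}$, $\{\be_n\}$. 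Since these two maps are mutually inverse (a routine check of the defining recursions), a point $\{\la_j\}_{j=1}^{2l-1}$ is the spectrum of some problem \eqref{eqpsi}--\eqref{bcpsi} with the given $\{\al_n^{\bullet}\}$, $\{\be_n^{\bullet}\}$, $\al_l$ if and only if it is the spectrum of a problem \eqref{eqy}--\eqref{bcR} with the corresponding $R_0$, $R_1$.

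Once this equivalence is in place, I would invoke Theorem~\ref{thm:nscR}: the latter already asserts that $\{\la_j\}_{j=1}^{2l-1}$ is a spectrum of \eqref{eqy}--\eqref{bcR} with relatively prime $R_0$, $R_1$ satisfying \eqref{asymptR} if and only if the Weyl coefficients $\{M_k\}_{k=1}^{2l-1}$ produced by steps~1--3 of Algorithm~\ref{alg:R} satisfy \eqref{Dcond}. I must therefore confirm that the hypotheses of Theorem~\ref{thm:nscR} are met here, namely that $R_0=\al_l\vv_{l+1}^{\bullet}$ and $R_1=\vv_l^{\bullet}$ really are relatively prime and satisfy \eqref{asymptR}. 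The asymptotics \eqref{asymptR}--\eqref{r01} have already been derived in the excerpt from \eqref{asymptphi} together with $\al_l\ne\al_l^{\bullet}$ and the non-vanishing of $\al_k^{\bullet}$, $\al_l$, so only coprimality needs an argument: if $\vv_{l+1}^{\bullet}(\la_0)=\vv_l^{\bullet}(\la_0)=0$, then by the recursion \eqref{eqtpsi} run backwards (using $\al_n^{\bullet}\ne0$) all $\vv_n^{\bullet}(\la_0)$ vanish, contradicting $\vv_1^{\bullet}=1$; hence $R_0$ and $R_1$ have no common root.

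Finally, I would identify the Weyl coefficients: steps~1--3 of Algorithm~\ref{alg:psi} compute $\vv_n^{\bullet}$ and then $R_0$, $R_1$, after which step~3 of Algorithm~\ref{alg:psi} is literally an invocation of Algorithm~\ref{alg:R}; so ``the numbers $\{M_k\}_{k=1}^{2l-1}$ constructed by steps~1--3 of Algorithm~\ref{alg:psi} and by steps~1--3 of Algorithm~\ref{alg:R}'' are exactly the Weyl coefficients appearing in Theorem~\ref{thm:nscR}. Combining the spectral equivalence with Theorem~\ref{thm:nscR} gives the necessary and sufficient condition \eqref{Dcond}. For uniqueness, suppose two admissible tuples $\{\al_n\}$, $\{\be_n\}$ and $\{\tilde\al_n\}$, $\{\tilde\be_n\}$ yield the same $\{\la_j\}_{j=1}^{2l-1}$; applying the forward change of variables \eqref{defR} to each gives two tuples $\{a_n\}$, $\{b_n\}$ and $\{\tilde a_n\}$, $\{\tilde b_n\}$ for \eqref{eqy}--\eqref{bcR} with the same $R_0$, $R_1$ and the same spectrum, so by the uniqueness part of Theorem~\ref{thm:nscR} they coincide, and then the inverse formulas \eqref{calcd}--\eqref{calcalbe}, being functions of the data, force $\{\al_n\}=\{\tilde\al_n\}$, $\{\be_n\}=\{\tilde\be_n\}$. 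I expect the only mildly delicate point to be the careful bookkeeping that the maps \eqref{defR} and \eqref{calcd}--\eqref{calcalbe} are genuinely inverse to each other and that the index shift $n\mapsto l+1-n$ is handled consistently in the boundary conditions \eqref{bcpsi} versus \eqref{bcR}; everything else is a direct appeal to the already-established Theorem~\ref{thm:nscR}.
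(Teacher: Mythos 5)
Your proposal is correct and takes essentially the same route as the paper, which states Theorem~\ref{thm:nscpsi} without a separate proof, as a direct consequence of the change of variables \eqref{defR}--\eqref{calcalbe} (reducing \eqref{eqpsi}--\eqref{bcpsi} to the spectrally equivalent problem \eqref{eqy}--\eqref{bcR}) combined with Theorems~\ref{thm:nscWeyl} and~\ref{thm:nscR}. Your extra check that $R_0 = \al_l \vv_{l+1}^{\bullet}$ and $R_1 = \vv_l^{\bullet}$ are relatively prime (via the backward recursion in \eqref{eqtpsi} with $\al_n^{\bullet} \ne 0$) correctly supplies a hypothesis of Theorem~\ref{thm:nscR} that the paper leaves implicit.
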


\begin{remark} \label{rem:imp}
Theorems~\ref{thm:nscR} and~\ref{thm:nscpsi} contain the implicit condition~\eqref{Dcond}. Observe that analogous implicit conditions arise in investigation of the half-inverse problem for the Sturm-Liouville differential equation (see \cite{HM04, MP10}). It seems that such kind of conditions is unavoidable for these problems.
\end{remark}

Since each step of Algorithms~\ref{alg:R} and~\ref{alg:psi} is stable, we obtain the following theorems on local solvability and stability of Inverse Problems~\ref{ip:psi}-\ref{ip:R}.

\begin{thm} \label{thm:locR}
Let $R_0(\la)$ and $R_1(\la)$ be fixed relatively prime polynomials satisfying~\eqref{asymptR}, let $\{ a_n \}_{n = 1}^l$, $\{ b_n \}_{n = 1}^l$ be fixed complex numbers, $a_n \ne 0$, $n = \overline{1, l-1}$, $a_l = 1$, and let $\{ \la_j \}_{j = 1}^{2l-1}$ be the eigenvalues of the corresponding problem~\eqref{eqy}-\eqref{bcR}. Then, there exists $\eps > 0$ (depending on $R_0$, $R_1$, $\{ a_n \}$, and $\{ b_n \}$) such that, for any complex numbers $\{ \tilde \la_j \}_{j = 1}^{2l-1}$ and for any polynomials
$$
\tilde R_j(\la) = \sum_{n = 0}^{l-j} \tilde r_{n,j}\la^n, \quad j = 0, 1,
$$
satisfying the estimate
$$
\de := \max_{j = \overline{1, 2l-1}} |\la_j - \tilde \la_j| + \max_{\substack{n = \overline{0, l-j}, \\ j = 0, 1}} |r_{n,j} - \tilde r_{n,j}| \le \eps,
$$
there exist unique complex numbers $\{ \tilde a_n \}_{n = 1}^l$ and $\{ \tilde b_n \}_{n = 1}^l$, $\tilde a_l = 1$, such that $\{ \tilde \la_j \}_{j = 1}^{2l-1}$ are the eigenvalues of the problem~\eqref{eqy}-\eqref{bcR} with the coefficients $\{ a_n \}$, $\{ b_n \}$, $R_0$, $R_1$
replaced by
$\{ \tilde a_n \}$, $\{ \tilde b_n \}$, $\tilde R_0$, $\tilde R_1$, respectively. Moreover, the following estimates hold:
$$
|a_n - \tilde a_n| \le C \de, \quad |b_n - \tilde b_n| \le C \de, \quad n = \overline{1, l},
$$
where the constant $C$ depends only on $R_0$, $R_1$, $\{ a_n \}$, and $\{ b_n \}$.
\end{thm}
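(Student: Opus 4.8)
The plan is to show that each of the four steps of Algorithm~\ref{alg:R} defines a map which is Lipschitz continuous on a neighbourhood of the original data, with the neighbourhood and the Lipschitz constants independent of the perturbation, and then to compose the resulting estimates and invoke Theorem~\ref{thm:locWeyl} at the final step. Throughout, a ``constant'' means a quantity depending only on $R_0$, $R_1$, $\{a_n\}$, $\{b_n\}$.

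First I would track the construction $(\{\la_j\}, R_0, R_1) \mapsto \{M_k\}_{k=1}^{2l}$ through Steps~1--3. By \eqref{prodE} the coefficients $\{e_s\}$ of $E(\la)$ are polynomials in $\{\la_j\}$ and in $r_0 - r_1 = r_{l,0} - r_{l-1,1}$, hence locally Lipschitz in the data. The numbers $\{v_{k,j}\}$ solve the linear system \eqref{sysv}, whose matrix is linear in the coefficients $\{r_{n,j}\}$ and, by Lemma~\ref{lem:uniqv}, nonsingular at the base point; therefore on a small enough neighbourhood its determinant stays bounded away from zero, and Cramer's rule yields Lipschitz dependence of $\{v_{k,j}\}$ on $\{e_s\}$ and $\{r_{n,j}\}$. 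Finally, since the leading coefficients $v_{l,0}$ and $v_{l-1,1}$ are fixed equal to $1$ (see \eqref{lead}), expanding $v_1(\la) = v_0(\la) M(\la)$ with $M(\la) = \sum_{k \ge 1} M_k \la^{-k}$ at infinity expresses each $M_k$ as a polynomial in $\{v_{k,j}\}$ — in particular $M_1 = 1$ automatically. Composing, I would obtain a constant $C_1$ with $|M_k - \tilde M_k| \le C_1 \de$, $k = \overline{1, 2l}$, for $\de$ small; at the same time, for $\de$ small the perturbed polynomials $\tilde R_0, \tilde R_1$ remain relatively prime and satisfy \eqref{asymptR}, and, by continuity of the determinants $\Delta_n$ in the $M_k$, the perturbed numbers $\{\tilde M_k\}_{k=1}^{2l-1}$ still satisfy \eqref{Dcond} (the base numbers do, since $\{\la_j\}$ are eigenvalues of the base problem, cf. Theorem~\ref{thm:nscR}).

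Next I would apply Theorem~\ref{thm:locWeyl}. At the base point $M(\la) = v_1(\la)/v_0(\la)$ is the Weyl function of \eqref{eqy} with $\{a_n\}, \{b_n\}$, $a_l = 1$, so $\{M_k\}_{k=1}^{2l}$ are its Weyl coefficients. Theorem~\ref{thm:locWeyl} provides $\eps_1 > 0$ such that $\max_{k = \overline{2,2l}} |M_k - \tilde M_k| \le \eps_1$ forces the existence of unique $\{\tilde a_n\}_{n=1}^l, \{\tilde b_n\}_{n=1}^l$ with $\tilde a_l = 1$, $\tilde a_n \ne 0$, having $\{\tilde M_k\}_{k=2}^{2l}$ as Weyl coefficients, together with $|a_n - \tilde a_n| \le C_2 \max_k|M_k - \tilde M_k|$ and the analogous estimate for $b_n$. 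Taking $\eps > 0$ so small that $\de \le \eps$ implies $C_1 \de \le \eps_1$ and also the openness conclusions of the previous paragraph, I would get $|a_n - \tilde a_n| \le C\de$, $|b_n - \tilde b_n| \le C\de$ with $C = C_1 C_2$. Finally, to see that these $\{\tilde a_n\}, \{\tilde b_n\}$ are precisely the solution of Inverse Problem~\ref{ip:R} for the perturbed data, I would invoke Theorem~\ref{thm:nscR}: for the relatively prime polynomials $\tilde R_0, \tilde R_1$ and the numbers $\{\tilde\la_j\}$, the condition \eqref{Dcond} for $\{\tilde M_k\}$ (already verified) is necessary and sufficient for $\{\tilde\la_j\}$ to be the eigenvalues of a problem \eqref{eqy}-\eqref{bcR} with $\tilde R_0, \tilde R_1$, the solution then being unique and given by Algorithm~\ref{alg:R}, i.e. by the very $\{\tilde a_n\}, \{\tilde b_n\}$ constructed above.

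The main obstacle I anticipate is making the dependence in Steps~1--3 Lipschitz on a neighbourhood whose size does not shrink as the perturbation varies; concretely, this amounts to a uniform lower bound for the modulus of the determinant of the system \eqref{sysv} over all nearby $\tilde R_0, \tilde R_1$, which follows from Lemma~\ref{lem:uniqv} together with a continuity argument. The rest is routine: composition of finitely many locally Lipschitz maps (polynomial, or rational with nonvanishing denominator), plus the openness under small perturbations of the non-degeneracy conditions — relative primeness of $R_0,R_1$, the asymptotics \eqref{asymptR}, and the determinant condition \eqref{Dcond}.
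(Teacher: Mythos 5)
Your proposal is correct and follows essentially the same route as the paper, which justifies Theorem~\ref{thm:locR} in one line by noting that every step of Algorithm~\ref{alg:R} is stable and then relying on Theorem~\ref{thm:locWeyl}; you simply make explicit the Lipschitz continuity of steps 1--3 (via \eqref{prodE}, the nonsingularity of \eqref{sysv} from Lemma~\ref{lem:uniqv}, and the openness of the conditions \eqref{asymptR} and \eqref{Dcond}) and the final appeal to Theorems~\ref{thm:locWeyl} and~\ref{thm:nscR}.
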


\begin{thm} \label{thm:locpsi}
Let $\{ \al_n \}_{n = 1}^l$, $\{ \beta_n \}_{n = 1}^l$, $\{ \al_n^{\bullet} \}_{n = 1}^l$, $\{ \beta_n^{\bullet} \}_{n = 1}^l$ be fixed complex numbers, $\al_n \ne 0$, $\al_n^{\bullet} \ne 0$, $\al_l \ne \al_l^{\bullet}$, and let $\{ \la_j \}_{j = 1}^{2l-1}$ be the eigenvalues of the corresponding problem~\eqref{eqpsi}-\eqref{bcpsi}. Then, there exists $\eps > 0$ (depending on $\{ \al_n \}$, $\{ \beta_n \}$, $\{ \al_n^{\bullet} \}$, $\{ \be_n^{\bullet} \}$) such that, for any complex numbers $\{ \tilde \la_j \}_{j = 1}^{2l-1}$, $\{ \tilde \al_n^{\bullet} \}_{n = 1}^l$, $\{ \tilde \beta_n^{\bullet} \}_{n = 1}^l$, $\tilde \al_l$ satisfying the estimate
$$
\de := \max_{j = \overline{1, 2l-1}} |\la_j - \tilde \la_j| + \max_{n = \overline{1, l}} (|\al_n^{\bullet} - \tilde \al_n^{\bullet}| + |\beta_n^{\bullet} - \tilde \beta_n^{\bullet}|) + |\al_l - \tilde \al_l| \le \eps,
$$
there exist unique complex numbers $\{ \tilde \al_n \}_{n = 1}^{l-1}$ and $\{ \tilde \beta_n \}_{n = 1}^l$ such that $\{ \tilde \la_j \}_{j = 1}^{2l-1}$ are the eigenvalues of the problem \eqref{eqpsi}-\eqref{bcpsi} with the coefficients $\{ \al_n \}$, $\{ \be_n \}$, $\{ \al_n^{\bullet} \}$, $\{ \be_n^{\bullet} \}$ replaced by $\{ \tilde \al_n \}$, $\{ \tilde \be_n \}$, $\{ \tilde \al_n^{\bullet} \}$, $\{ \tilde \be_n^{\bullet} \}$, respectively. Moreover, the following estimates hold:
$$
|\al_n - \tilde \al_n| \le C \de, \quad |\be_n - \tilde \be_n| \le C \de, \quad n = \overline{1, l},
$$
where the constant $C$ depends only on $\{ \al_n \}$, $\{ \be_n \}$, $\{ \al_n^{\bullet} \}$, and $\{ \be_n^{\bullet} \}$.
\end{thm}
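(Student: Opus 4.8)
The plan is to deduce Theorem~\ref{thm:locpsi} from Theorem~\ref{thm:locR} by controlling the stability of the change of variables underlying Algorithm~\ref{alg:psi}. Let $(\{\al_n\}, \{\be_n\}, \{\al_n^\bullet\}, \{\be_n^\bullet\})$ be the fixed coefficients, and let $(\{a_n\}, \{b_n\}, R_0, R_1)$ be the data produced from them by the substitution preceding~\eqref{defR} together with~\eqref{defR} itself, so that $\{\la_j\}_{j=1}^{2l-1}$ are the eigenvalues of the problem~\eqref{eqy}-\eqref{bcR} with these coefficients, $a_n \ne 0$ for $n = \overline{1,l-1}$, $a_l = 1$, and $R_0, R_1$ are relatively prime polynomials satisfying~\eqref{asymptR}.

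First I would establish the Lipschitz dependence of $R_0, R_1$ on the known part of the data. For perturbed data $(\{\tilde\al_n^\bullet\}, \{\tilde\be_n^\bullet\}, \tilde\al_l)$, steps~1--2 of Algorithm~\ref{alg:psi} produce polynomials $\tilde R_0, \tilde R_1$; since $[\tilde\vv_n^\bullet]_{n=0}^{l+1}$ solves the linear recurrence~\eqref{eqtpsi} whose coefficients depend polynomially on $\{\tilde\al_n^\bullet\}, \{\tilde\be_n^\bullet\}$, the coefficients of $\tilde R_j$ depend real-analytically on the data, whence $\max_{n,j}|r_{n,j} - \tilde r_{n,j}| \le C_1 \de$ with $C_1$ depending only on the base point. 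Moreover, the conditions $\al_n^\bullet \ne 0$, $\al_l \ne 0$, $\al_l \ne \al_l^\bullet$ are open, so they persist for small $\de$; consequently $\tilde R_0, \tilde R_1$ still satisfy~\eqref{asymptR} with unchanged degrees, and since the resultant of $R_0, R_1$ is nonzero and depends continuously on the coefficients, $\tilde R_0, \tilde R_1$ remain relatively prime once $\de$ is small enough.

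Next I would apply Theorem~\ref{thm:locR}. The bound above yields $\max_j|\la_j - \tilde\la_j| + \max_{n,j}|r_{n,j}-\tilde r_{n,j}| \le (1+C_1)\de$, and Theorem~\ref{thm:locR} already allows simultaneous perturbations of the eigenvalues and of $R_0, R_1$. Hence, for $\de$ small enough, there exist unique $\{\tilde a_n\}_{n=1}^l, \{\tilde b_n\}_{n=1}^l$ with $\tilde a_l = 1$ (and automatically $\tilde a_n \ne 0$ for $n = \overline{1,l-1}$, since $\tilde a_n$ is close to $a_n \ne 0$) such that $\{\tilde\la_j\}$ are the eigenvalues of~\eqref{eqy}-\eqref{bcR} with data $\{\tilde a_n\}, \{\tilde b_n\}, \tilde R_0, \tilde R_1$, and $|a_n - \tilde a_n| \le C_2\de$, $|b_n - \tilde b_n| \le C_2\de$. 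Then I would reverse the substitution: formulas~\eqref{calcd}-\eqref{calcalbe}, fed with $\{\tilde a_n\}$ and $\tilde\al_l$, produce $\{\tilde d_n\}$ and then $\{\tilde\al_n\}_{n=1}^{l-1}$ and $\tilde\be_n = \tilde b_{l+1-n}$; since the $\tilde a_n$ remain close to the nonzero $a_n$ and $\tilde\al_l$ close to $\al_l \ne 0$, the recursion~\eqref{calcd} is well defined and Lipschitz, so $|\al_n - \tilde\al_n| \le C_3\de$ and $|\be_n - \tilde\be_n| \le C_3\de$. The equivalence of~\eqref{eqpsi}-\eqref{bcpsi} and~\eqref{eqy}-\eqref{bcR} through~\eqref{defR} guarantees that the reconstructed coefficients give a problem~\eqref{eqpsi}-\eqref{bcpsi} with exactly the eigenvalues $\{\tilde\la_j\}$ (and $\tilde\al_l \ne \tilde\al_l^\bullet$ for small $\de$, so there are exactly $2l-1$ of them), while uniqueness of $\{\tilde\al_n\}, \{\tilde\be_n\}$ follows from that of $\{\tilde a_n\}, \{\tilde b_n\}$ because~\eqref{calcd}-\eqref{calcalbe} is a bijection once $\tilde\al_l$ and $\{\tilde\al_n^\bullet\}, \{\tilde\be_n^\bullet\}$ are fixed. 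Collecting the three estimates into one constant $C$ depending only on the base point and taking $\eps$ below all the thresholds used will finish the argument.

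The hard part will be the bookkeeping around the open conditions: one must verify, with explicit thresholds, that for $\de$ small every perturbed object stays admissible --- $\tilde\al_n^\bullet \ne 0$, $\tilde\al_l \ne \tilde\al_l^\bullet$, the polynomials $\tilde R_j$ relatively prime with the correct degrees, $\tilde a_n \ne 0$ for $n \le l-1$, and the implicit condition~\eqref{Dcond} behind Theorem~\ref{thm:locR} --- so that the entire chain of reductions and inverse reductions is defined and Lipschitz on one fixed neighborhood of the base point. Everything else is a routine composition of locally Lipschitz (indeed real-analytic) maps.
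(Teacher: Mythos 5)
Your proposal is correct and follows essentially the same route as the paper: the authors prove Theorem~\ref{thm:locpsi} simply by observing that every step of Algorithm~\ref{alg:psi} is stable — Lipschitz dependence of $R_0,R_1$ on $\{\al_n^{\bullet}\},\{\be_n^{\bullet}\},\al_l$, application of Theorem~\ref{thm:locR}, and the Lipschitz inverse change of variables \eqref{calcd}--\eqref{calcalbe} — which is exactly the chain you spell out (your resultant argument for relative primality of $\tilde R_0,\tilde R_1$ is even dispensable, since polynomials built by \eqref{defR} from a three-term recurrence with nonzero $\tilde\al_n^{\bullet}$ can never share a root).
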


Note that Theorems~\ref{thm:locR}-\ref{thm:locpsi} do not contain implicit conditions mentioned in Remark~\ref{rem:imp}.

\begin{remark}
Our approach can be generalized to the case when \eqref{eqtpsi} contains $\tilde l$ equations, $\tilde l > l$, and $\deg(R_j) > l-j$, $j = 0, 1$. In this case, a part of the spectrum is sufficient for the recovery of the unknown coefficients. In the case $\tilde l < l$, on the contrary, one has to use additional spectral data together with the eigenvalues for solving the inverse problem.
\end{remark}

\section{Connection with Hochstadts' results} \label{sec:Hochstadt}

In this section, we discuss the relation of our results with the classical results of Hochstadt~\cite{Hoch74, Hoch79}.

In \cite{Hoch74}, the reconstruction of the tridiagonal symmetric Jacobi matrix has been studied, by using its spectrum and the spectrum of the truncated matrix obtained by deleting the first row and the first column. In our notations, this problem can be formulated as follows. Let $\{ \mu_j \}_{j = 1}^l$ and $\{ \nu_j \}_{j = 1}^{l-1}$ be the eigenvalues (counting with their multiplicities) of the boundary value problems for the system
\begin{equation} \label{eqAB}
A_{n-1} u_{n-1} + B_n u_n + A_n u_{n + 1} = \la u_n, \quad n = \overline{1, l},
\end{equation}
with the boundary conditions $u_0 = u_{l + 1} = 0$ and $u_1 = u_{l + 1} = 0$, respectively. For the second boundary value problem, we suppose that $n = \overline{2, l}$ in \eqref{eqAB}. Suppose that $A_n \ne 0$, $n = \overline{0, l}$. The coefficients $A_0$ and $A_l$ are unimportant, so we assume for definiteness that $A_0 = A_l = 1$.

\begin{ip} \label{ip:Hoch1}
Given $\{ \mu_j \}_{j = 1}^l$ and $\{ \nu_j \}_{j = 1}^{l-1}$, find $\{ A_n \}_{n = 1}^{l-1}$ and $\{ B_n \}_{n = 1}^l$.
\end{ip}

This inverse problem, studied in \cite{Hoch74}, is closely related with Inverse Problems~\ref{ip:Weyl}-\ref{ip:2sp}. Indeed, the system~\eqref{eqAB} is reduced to the form~\eqref{eqy} by the change of variables:
\begin{equation} \label{chAB}
    u_n = d_n y_n,\: n = \overline{0, l+1},  \quad a_n = A_n^2, \quad b_n = B_n, \quad n = \overline{1, l},
\end{equation}
where the coefficients $\{ d_n \}_{n = 0}^{l + 1}$ can be chosen uniquely up to a multiplicative constant so that $d_{n + 1} = A_n d_n$, $n = \overline{0, l}$.
Consequently, the spectra $\{ \mu_j \}_{j = 1}^l$ and $\{ \nu_j \}_{j = 1}^{l-1}$ defined in this section coincide with the ones defined in Section~\ref{sec:aux}. Hochstadt \cite{Hoch74} considered the case of real $A_n > 0$. In this case, the correspondence \eqref{chAB} between the coefficients $\{ a_n\}_{n = 1}^{l-1}$, $\{b_n\}_{n = 1}^l$ and $\{A_n\}_{n=1}^{l-1}$, $\{B_n\}_{n = 1}^l$ is one-to-one, so Inverse Problems~\ref{ip:Weyl}, \ref{ip:2sp}, and~\ref{ip:Hoch1} are equivalent. However, in the general case of complex coefficients $A_n \ne 0$, they are determined by $a_n$ uniquely up to the sign.

In \cite{Hoch79}, the Jacobi matrix has been constructed by using the known spectrum and a half of the matrix coefficients. For definiteness, consider the odd $l = 2m-1$. Then the inverse problem from~\cite{Hoch79} can be formulated as follows.

\begin{ip} \label{ip:Hoch2}
Given $\{ A_n \}_{n = 1}^{m-1}$, $\{ B_n \}_{n = 1}^{m-1}$, and $\{ \mu_j \}_{j = 1}^l$, find $\{ A_n \}_{n = m}^{l-1}$ and $\{ B_n \}_{n = 1}^l$.
\end{ip}

Let us show that Inverse Problem~\ref{ip:Hoch2} can be reduced to Inverse Problem~\ref{ip:R}. By using $\{ A_n \}_{n = 1}^{m-1}$ and $\{ B_n \}_{n = 1}^{m-1}$, one can find $\{ a_n \}_{n = 1}^{m-1}$ and $\{ b_n \}_{n = 1}^{m-1}$ via~\eqref{chAB}, and then construct the polynomials $[P_n(\la)]_{n = 0}^m$ defined in Section~\ref{sec:aux}. It can be easily shown that the eigenvalue problem for~\eqref{eqAB} with the boundary conditions $u_0 = u_{l + 1} = 0$ is equivalent to the following eigenvalue problem:
\begin{align*}
    & a_n y_{n + 1} + b_n y_n + y_{n-1} = \la y_n, \quad n = \overline{m, l}, \\
    & P_{m-1}(\la) y_m - P_m(\la) y_{m-1} = 0, \quad y_{l + 1} = 0,
\end{align*}
where $P_{m-1}(\la)$ and $P_m(\la)$ are relatively prime polynomials of degrees $(m-2)$ and $(m-1)$, respectively. The latter problem is similar to~\eqref{eqy}-\eqref{bcR}, so the results of Section~\ref{sec:main} (Algorithm~\ref{alg:R}, Theorems~\ref{thm:nscR} and~\ref{thm:locR}) can be applied to this problem. After recovering $\{ a_n \}_{n = m}^{l-1}$ and $\{ b_n \}_{n = m}^l$ from the spectrum $\{ \mu_j \}_{j = 1}^l$, one can uniquely find $\{ A_n \}_{n = m}^{l-1}$ and $\{ B_n \}_{n = m}^l$ from~\eqref{chAB} if $A_n > 0$. Thus, our approach based on Inverse Problem~\ref{ip:R} with polynomials in the boundary condition can be applied to the Hochstadt inverse problem by mixed data.

\medskip

{\bf Acknowledgments.} The work of the author N.P.~Bondarenko was supported by Grant 20-31-70005 of the Russian Foundation for Basic Research. The work of the author V.A. Yurko was supported by Grant 19-01-00102 of the Russian Foundation for Basic Research.

\medskip

{\bf Conflict of interest.} The authors declare that this paper has no conflict of interest.

\medskip

{\bf Authors' contributions.} Sections~\ref{sec:intr}, \ref{sec:main}-\ref{sec:Hochstadt} belong to N.P.~Bondarenko, Section~\ref{sec:aux} belongs to V.A.~Yurko.

\medskip

\noindent Natalia Pavlovna Bondarenko \\
1. Department of Applied Mathematics and Physics, Samara National Research University, \\
Moskovskoye Shosse 34, Samara 443086, Russia, \\
2. Department of Mechanics and Mathematics, Saratov State University, \\
Astrakhanskaya 83, Saratov 410012, Russia, \\
e-mail: {\it bondarenkonp@info.sgu.ru}

\medskip

\noindent Vjacheslav Anatoljevich Yurko \\
Department of Mechanics and Mathematics, Saratov State University, \\
Astrakhanskaya 83, Saratov 410012, Russia, \\
e-mail: {\it yurkova@info.sgu.ru}

\end{document}